\newsavebox{\@brx}
\newcommand{\llangle}[1][]{\savebox{\@brx}{\(\m@th{#1\langle}\)}%
	\mathopen{\copy\@brx\kern-0.5\wd\@brx\usebox{\@brx}}}
\newcommand{\rrangle}[1][]{\savebox{\@brx}{\(\m@th{#1\rangle}\)}%
	\mathclose{\copy\@brx\kern-0.5\wd\@brx\usebox{\@brx}}}
\newsavebox{\measure@tikzpicture}
	\edef\tikzscale{\pgfmathresult}%
\DeclarePairedDelimiter\norm{\lvert}{\rvert}
\def\irr#1{{\rm  Irr}(#1)}
\def\cd#1{{\rm  cd}(#1)}
\numberwithin{equation}{section}
\newcounter{intro}
\newtheorem{introthm}[intro]{Theorem}
\newtheorem{thm}[equation]{Theorem}
\newtheorem{lem}[equation]{Lemma}
\newtheorem{cor}[equation]{Corollary}
\theoremstyle{remark}
\theoremstyle{definition}
\title{GVZ-groups, Flat groups, and CM-Groups}
\author{Shawn T. Burkett}
\address{Department of Mathematical Sciences, Kent State University, Kent,
	Ohio 44242, U.S.A.} \email{sburket1@kent.edu}
\author{Mark L. Lewis}
\address{Department of Mathematical Sciences, Kent State University, Kent,
	Ohio 44242, U.S.A.} \email{lewis@math.kent.edu}
\date{\today}
\keywords{GVZ groups; $p$-groups; fully-ramified characters; flat groups}
\subjclass[2010]{20C15}
\begin{document}

\begin{abstract}
We show that a group is a GVZ-group if and only if it is a flat group.  We show that the nilpotence class of a GVZ-group is bounded by the number of distinct degrees of irreducible characters.  We also show that certain CM-groups can be characterized as GVZ-groups whose irreducible character values lie in the prime field.
\end{abstract}

\maketitle

\section{Introduction}
In this note, all groups are finite, and we will write $\irr G$ for the set of irreducible characters of a group $G$.  It is well-known that the inequality $\chi(1) \le \norm {G:Z(\chi)}^{1/2}$ holds for all irreducible characters $\chi \in \irr{G}$, where $Z (\chi) = \{ g \in G \mid \norm{\chi(g)} = \chi(1) \}$ is the center of $\chi$.  Furthermore, equality holds if and only if every element of $G$ satisfies one of the two conditions: $g \in Z(\chi)$ or $\chi (g) = 0$.  (See Chapter 2 of \cite{MI76}.)

It has been observed that there is a parallelism between the irreducible characters of a group and the conjugacy classes of group.  Furthermore, there seems to be a mysterious relationship between the degrees of the irreducible characters and the sizes of the conjugacy classes. Chillag has a nice exposition about these parallels in \cite{chillag}.  With this in mind, it is not difficult to see that the inequality $\norm{\mathrm{cl}_G(g)}\le\norm{[g,G]}$ holds for every element $g\in G$, where $[g,G]$ is the (normal) subgroup of $G$ generated by the set $\gamma_G (g) = \{ [x,g] \mid x \in G\}$ of commutators involving $g$.  Furthermore, equality holds if and only if $\gamma_G (g)$ is a subgroup of $G$.  (See Corollary \ref{basics 1}.)

In this paper, we consider groups where these extremes occur.  Following \cite{AN12gvz}, we say $G$ is a {\it GVZ-group} if $\chi (1) = \norm {G:Z(\chi)}^{1/2}$ for every irreducible character $\chi \in \irr G$.   A group $G$ is a {\it flat group}, following \cite{flatness}, if $\norm{\mathrm{cl}_G(g)}=\norm{[g,G]}$ holds for every element $g\in G$. Interestingly, these two extreme situations can only occur simultaneously.  

\begin{introthm}\label{flat = gvz}
The group $G$ is flat if and only if it is a GVZ-group.
\end{introthm}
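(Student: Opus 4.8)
The plan is to prove the theorem in a \emph{pointwise} fashion: for each fixed $g \in G$, I will show that the flatness condition at $g$, namely $\norm{\mathrm{cl}_G(g)} = \norm{[g,G]}$ (equivalently, that $\gamma_G(g)$ is a subgroup, by Corollary \ref{basics 1}), is equivalent to the character-theoretic condition that every $\chi \in \irr G$ with $g \notin Z(\chi)$ satisfies $\chi(g) = 0$. Granting this local equivalence, the theorem follows immediately by quantifying over all $g$: the group is flat exactly when every $g$ is flat, which happens exactly when for every $g$ and every $\chi$ we have $g \in Z(\chi)$ or $\chi(g) = 0$, and this last condition is precisely the statement that every $\chi$ vanishes off $Z(\chi)$, i.e.\ that $G$ is a GVZ-group.

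The bridge between the two sides is the identity $g \in Z(\chi) \iff [g,G] \subseteq \ker\chi$, valid for every $\chi \in \irr G$ and every $g \in G$. This holds because $Z(\chi)/\ker\chi = Z(G/\ker\chi)$, so $g \in Z(\chi)$ says exactly that $g\ker\chi$ is central in $G/\ker\chi$, which in turn says $[g,G] \subseteq \ker\chi$. Consequently the set $\{\chi \in \irr G : g \in Z(\chi)\}$ is precisely $\irr(G/[g,G])$ inflated to $G$. On each such $\chi$ we have $\norm{\chi(g)} = \chi(1)$ by definition of $Z(\chi)$, so summing the squared degrees over this set gives $\sum_{g \in Z(\chi)} \norm{\chi(g)}^2 = \sum_{\eta \in \irr(G/[g,G])} \eta(1)^2 = \norm{G : [g,G]}$. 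I will pair this with the second orthogonality relation $\sum_{\chi \in \irr G} \norm{\chi(g)}^2 = \norm{C_G(g)}$. Subtracting, the characters with $g \notin Z(\chi)$ contribute $\norm{C_G(g)} - \norm{G:[g,G]} = \sum_{g \notin Z(\chi)} \norm{\chi(g)}^2 \ge 0$, which both recovers the inequality $\norm{\mathrm{cl}_G(g)} \le \norm{[g,G]}$ and shows that equality holds if and only if $\chi(g) = 0$ for every $\chi$ with $g \notin Z(\chi)$. This is exactly the local equivalence sought.

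The computation itself is routine orthogonality bookkeeping, so the main conceptual step, and the one I would emphasize, is recognizing that the characters "centralizing" $g$ are precisely those containing $[g,G]$ in their kernel; this is what forces the partial sum $\sum_{g \in Z(\chi)} \norm{\chi(g)}^2$ to collapse to $\norm{G:[g,G]}$ and thereby makes the commutator data and the character data refer to the same quantity. Once that identification is in place, the equality cases of the two classical inequalities (one from Chapter 2 of \cite{MI76}, one from Corollary \ref{basics 1}) are seen to be governed by the single nonnegative error term $\sum_{g \notin Z(\chi)} \norm{\chi(g)}^2$, and the equivalence of flat and GVZ drops out with no further structural hypotheses on $G$.
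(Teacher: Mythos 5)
Your proof is correct, and although it runs on the same two engines as the paper's argument --- the identity $g \in Z(\chi) \Leftrightarrow [g,G] \le \ker(\chi)$ (Lemma~\ref{cen cond}) and the second orthogonality relation --- it is organized differently and is genuinely leaner. The paper routes through a general four-way equivalence (Lemma~\ref{basics}) stated for an arbitrary normal subgroup $M$ and an element $g \in G \setminus M$: there $\sum_{\chi \in \irr{G/M}} \norm{\chi(g)}^2$ is identified with $\norm{C_{G/M}(gM)}$ by a second application of orthogonality in the quotient, and a separate combinatorial count of the conjugates of the coset $gM$ is needed to tie the centralizer condition back to the class--coset condition; only afterwards is $M$ specialized to $[g,G]$ in Corollary~\ref{basics 1}. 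You instead specialize to $M = [g,G]$ from the outset and exploit the resulting centrality of $g$ modulo $[g,G]$, so that the sum over $\{\chi \in \irr{G} : g \in Z(\chi)\} = \irr{G/[g,G]}$ collapses outright to $\sum_{\eta} \eta(1)^2 = \norm{G : [g,G]}$; the single nonnegative error term $\norm{C_G(g)} - \norm{G:[g,G]} = \sum_{\chi :\, g \notin Z(\chi)} \norm{\chi(g)}^2$ then simultaneously yields the inequality $\norm{\mathrm{cl}_G(g)} \le \norm{[g,G]}$ and its equality case, with no coset-counting needed. Indeed, your citation of Corollary~\ref{basics 1} for the subgroup characterization of $\gamma_G(g)$ is dispensable: flatness as defined is precisely the cardinality equation your error term measures, and your computation even handles $g \in Z(G)$ uniformly, whereas the paper's lemma excludes $g \in M$. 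What the paper's detour buys is a reusable general lemma in the spirit of Camina-pair theory together with the pointwise equivalences of Corollary~\ref{basics 1} (subgroup, coset, and vanishing conditions), which have independent interest; what yours buys is brevity and the transparent observation that one nonnegative quantity governs the equality cases of both classical inequalities at once.
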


Let $G$ be a GVZ-group. Then $G$ is necessarily nilpotent (see \cite[Proposition 3.2]{OnoType} or \cite[Proposition 1.2]{AN12gvz}). Since nilpotent groups are $M$-groups, Taketa's theorem implies that the derived length of $G$ is bounded by the number distinct degrees of the irreducible characters of $G$.  That is, if ${\rm dl} (G)$ is the derived length of $G$ and $\cd G = \{ \chi (1) \mid \chi \in \irr G\}$ is the set of irreducible character degrees, then ${\rm dl} (G) \le |\cd G|$.  

In general, there is not a bound between the number of character degrees and the nilpotence class of a nilpotent group.  In particular, all dihedral, semi-dihedral and generalized quaternion $2$-groups have the character degree set $\{ 1, 2\}$ and there exist examples of each of these groups with arbitrarily large nilpotence classes.  In fact, there has been some research on which sets of character degrees bound the nilpotence class of a $p$-group.  (See \cite{class1}, \cite{class2}, and \cite{class3}.)

We adapt the classical Taketa argument to show that the nilpotence class of a GVZ-group is bounded by the number of irreducible character degrees.  If $G$ is a nilpotent group, then we write $c(G)$ for the nilpotence class of $G$.

\begin{introthm}\label{taketa}
If $G$ is a GVZ-group, then $c (G) \le |\cd G|$.
\end{introthm}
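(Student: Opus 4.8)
The plan is to adapt Taketa's classical argument for bounding derived length to instead bound the nilpotence class. Recall that the Taketa argument for derived length proceeds by induction: one finds a normal subgroup (typically the kernel of a character, or related to a chief factor) such that the quotient has strictly fewer character degrees, and whose image in the quotient accounts for one step of the derived series. I want to run the analogous induction on the terms of the \emph{lower central series} $\gamma_i(G)$ rather than the derived series. So the goal is to produce, for each term of the lower central series that is nontrivial, a distinct character degree — yielding $c(G) \le |\cd G|$.

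First I would set up the induction on $|G|$. Since $G$ is a GVZ-group, it is nilpotent (and in fact a $p$-group, or a direct product of such; it suffices to treat the $p$-group case and handle the general nilpotent case by taking direct products of Sylow subgroups, noting that both $c(G)$ and $\cd G$ behave multiplicatively/maximally across the direct factors). The key structural fact I want to exploit is the GVZ property itself: for each $\chi \in \irr G$ we have $\chi(1) = |G : Z(\chi)|^{1/2}$, and $\chi$ vanishes off $Z(\chi)$. I expect the crucial lemma to be that the GVZ property is inherited by quotients $G/N$ whenever $N$ is a suitable normal subgroup — most naturally when $N \le Z(\chi)$ for the relevant characters, so that characters of $G/N$ are precisely those $\chi$ with $N \le \ker\chi$, and these remain fully-ramified over their centers. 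I would aim to show that $G/\gamma_c(G)$, or some appropriately chosen proper quotient, is again a GVZ-group, so the inductive hypothesis applies to it.

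The heart of the argument is the following dichotomy. Let $c = c(G)$ and look at $Z = Z(G)$, the last nontrivial term of the upper central series, together with $\gamma_c(G)$, the last nontrivial term of the lower central series. I would consider the characters $\chi \in \irr G$ whose degree is the \emph{maximal} degree, or whose center $Z(\chi)$ is smallest; these are the characters that ``see'' the full nilpotence of $G$. The plan is to show that there exists a faithful (or nearly faithful) irreducible character whose degree does not appear among the degrees of $\irr{G/\gamma_c(G)}$ — equivalently, that passing to $G/\gamma_c(G)$ strictly drops both the class (by exactly one) and the number of character degrees (by at least one). Concretely, I would argue that any $\chi \in \irr G$ with $\gamma_c(G) \not\le \ker\chi$ must have a degree strictly larger than every degree occurring in $G/\gamma_c(G)$, using the GVZ equality $\chi(1)^2 = |G : Z(\chi)|$ together with the fact that $\gamma_c(G) \le Z(G) \le Z(\chi)$ forces such $\chi$ to be genuinely larger. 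Then $c(G/\gamma_c(G)) = c - 1$ and $|\cd{G/\gamma_c(G)}| \le |\cd G| - 1$, and induction gives $c - 1 \le |\cd G| - 1$.

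The main obstacle I anticipate is precisely this last claim: ensuring that the character degrees contributed by characters nontrivial on $\gamma_c(G)$ are genuinely \emph{new}, i.e. strictly separated from the degrees appearing in the quotient, rather than merely accounting for one derived step as in ordinary Taketa. In the derived-length version one only needs a character that is nontrivial on $G'$ relative to the quotient, and distinctness of degrees comes cheaply; for nilpotence class one must track the lower central series term by term, and a single character degree could a priori be shared between a character trivial on $\gamma_c(G)$ and one that is not. I expect the GVZ hypothesis — the rigid relation $\chi(1)^2 = |G:Z(\chi)|$, which ties the degree tightly to the size of the center — to be exactly what rules this out, by forcing $Z(\chi)$ to strictly contain $Z(G/\gamma_c(G))$-preimages and hence forcing a strictly larger degree. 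Verifying that the GVZ property genuinely descends to $G/\gamma_c(G)$ (so the induction is legitimate) is the second delicate point, and I would isolate it as a preliminary lemma before running the main induction.
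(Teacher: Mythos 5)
Your inductive skeleton is reasonable, but the step it rests on is false --- and it is exactly the step you flagged as the anticipated obstacle. You claim that every $\chi\in\irr G$ with $\gamma_c(G)\not\le\ker(\chi)$ must have degree strictly larger than every degree of $G/\gamma_c(G)$, so that $\norm{\cd{G/\gamma_c(G)}}\le\norm{\cd G}-1$. The GVZ equality does not supply this rigidity, because GVZ-groups are closed under direct products and a direct factor of small class can carry the large degrees. Concretely, let $H$ be a GVZ $p$-group of class $3$ (these exist; see the constructions discussed in Section~\ref{gvz section}), choose $\chi_H\in\irr H$ with $\gamma_3(H)\not\le\ker(\chi_H)$ (possible since the kernels intersect trivially), and write $\chi_H(1)=p^a$. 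Let $K$ be extraspecial of order $p^{2a+3}$, and set $G=H\times K$, a GVZ-group of class $3$ with $\gamma_3(G)=\gamma_3(H)\times 1$. Then $\chi_H\times 1_K$ does not contain $\gamma_3(G)$ in its kernel yet has degree $p^a$, while $1_H\times\psi_K$, for nonlinear $\psi_K\in\irr K$, factors through $G/\gamma_3(G)$ and has degree $p^{a+1}>p^a$: your separation claim fails outright. Taking $K$ of order $p^{2a+1}$ instead makes the two degrees equal, so a single degree genuinely can be shared between a character nontrivial on $\gamma_c(G)$ and one trivial on it, and the inductive step as you state it collapses. (Your second worry, by contrast, is harmless: for $N\trianglelefteq G$ with $N\le\ker(\chi)$, the character $\chi$ viewed in $\irr{G/N}$ has center $Z(\chi)/N$ and still vanishes off it, so quotients of GVZ-groups are always GVZ-groups.)

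The paper demands the degree drop only where GVZ actually forces it: in the presence of a \emph{faithful} irreducible character, and for the quotient by $Z(G)$ rather than by $\gamma_c(G)$. Its induction on $\norm{G}$ splits into two cases, and in the first no degree drop is needed at all. If every $\chi\in\irr G$ has $\ker(\chi)>1$, then each $G/\ker(\chi)$ is a smaller GVZ-group with $\cd{G/\ker(\chi)}\subseteq\cd G$, so induction bounds its class by $n=\norm{\cd G}$, i.e.\ $\gamma_{n+1}(G)\le\ker(\chi)$; intersecting over all $\chi$ gives $\gamma_{n+1}(G)=1$. If instead some $\chi$ is faithful, then $Z(\chi)=Z(G)$, and the GVZ equality $\chi(1)^2=\norm{G:Z(G)}$ pins $\chi(1)$ as the maximal degree $d_n$, since $\psi(1)^2\le\norm{G:Z(\psi)}\le\norm{G:Z(G)}$ for every $\psi\in\irr G$; moreover each $a\in\cd{G/Z(G)}$ satisfies $a^2\le\norm{G/Z(G)}/\norm{Z(G/Z(G))}<\norm{G:Z(G)}=d_n^2$, because $G/Z(G)$ is a nontrivial nilpotent group with nontrivial center. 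So precisely the maximal degree is guaranteed to disappear in the quotient, giving $\norm{\cd{G/Z(G)}}\le n-1$, whence $\gamma_n(G)\le Z(G)$ by induction and $\gamma_{n+1}(G)=[\gamma_n(G),G]=1$. Note that faithfulness is exactly what excludes direct-product configurations like the one above, where the small-degree character nontrivial on $\gamma_c(G)$ has a large kernel. Without this case split, and without trading $\gamma_c(G)$ for $Z(G)$, the separation your argument needs is simply not available.
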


We mention that Theorem \ref{taketa} is not without content, as there exist GVZ-groups of arbitrarily high nilpotence class. Such examples are constructed in \cite{ ML19gvz, AN16gvz} and will be discussed in Section~\ref{gvz section}.  We also note that Professor Mann has pointed out that Golikova and Starostin had previously constructed GVZ-groups that are $2$-groups of arbitrarily high nilpotence class in \cite{GoSt}.

As a separate application, we illustrate a connection between GVZ-groups and another type of group called a ${\rm CM}_n$-group. Initially studied in \cite{BZ}, a group $G$ is called a ${\rm CM}_n$-group if for every normal subgroup of $G$ appears as the kernel of at most $n$ irreducible characters of $G$. Of particular interest in this note are the ${\rm CM}_{p-1}$-groups where $p$ is a prime. Specifically, we show the following.  

\begin{introthm} \label{CMp-1-intro}
Let $p$ be a prime and let $G$ be a $p$-group.  Then $G$ is a ${\rm CM}_{p-1}$-group if and only if $G$ is a GVZ-group and every character in $\irr G$ has values in the $p^{\rm th}$ cyclotomic field.
\end{introthm}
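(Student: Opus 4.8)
The plan is to translate the ${\rm CM}_{p-1}$ condition into a statement about faithful irreducible characters of quotients and then analyze those via their central characters. First I would recall that an irreducible character of $G$ has kernel exactly $N$ precisely when it is the inflation of a faithful irreducible character of $Q := G/N$; hence $G$ is a ${\rm CM}_{p-1}$-group if and only if every quotient $Q$ of $G$ has at most $p-1$ faithful irreducible characters. I would also record two standing facts: quotients of GVZ-groups are again GVZ-groups (immediate from the defining equality, since inflation preserves degrees and $Z(\bar\chi)=Z(\chi)/N$ for $N\le\ker\chi$), and a $p$-group $Q$ admits a faithful irreducible character if and only if $Z(Q)$ is cyclic.

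The technical heart is a counting lemma for a $p$-group $Q$ whose center $Z=Z(Q)$ is cyclic of order $p^{k}$. Assigning to each faithful $\chi\in\irr{Q}$ its central character $\lambda_\chi$ (the linear character of $Z$ with $\chi_Z=\chi(1)\lambda_\chi$) gives a map onto the set of faithful linear characters of $Z$, of which there are $\varphi(p^{k})=p^{k-1}(p-1)$; surjectivity holds because every irreducible constituent of $\lambda^{Q}$ is faithful when $\lambda$ is faithful, its kernel meeting $Z$ trivially. Thus $Q$ has at least $\varphi(p^{k})$ faithful irreducible characters. Using Frobenius reciprocity to get $\sum_{\psi\in\irr{Q\mid\lambda}}\psi(1)^2 = |Q:Z|$, this map is a bijection exactly when each fiber is a singleton, which occurs if and only if every faithful $\chi$ is fully ramified over $Z$, i.e.\ $\chi(1)^2=|Q:Z|$, which is precisely the GVZ equality for $\chi$ viewed on $Q$.

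With the lemma in hand both directions follow quickly. For the forward direction, if $Q=G/N$ has cyclic center of order $p^{k}$ then the lemma gives at least $\varphi(p^{k})$ faithful irreducible characters, so the ${\rm CM}_{p-1}$ hypothesis forces $p^{k-1}(p-1)\le p-1$, hence $k=1$ and $|Z(Q)|=p$; then $\varphi(p)=p-1$ forces the bijection, so every faithful $\chi$ of $Q$ is fully ramified. Applying this to $N=\ker\chi$ for each $\chi\in\irr{G}$ yields $\chi(1)^2=|G:Z(\chi)|$ (GVZ) and $|Z(\chi)/\ker\chi|=p$; since a GVZ-character vanishes off its center and restricts there to $\chi(1)\lambda$ with $\lambda$ of order $p$, its values lie in the $p^{\rm th}$ cyclotomic field. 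Conversely, if $G$ is a GVZ-group with values in $\mathbb{Q}(\zeta_p)$, every quotient $Q$ is GVZ, so the equality case of the lemma holds and $Q$ has exactly $\varphi(|Z(Q)|)$ faithful irreducibles when $Z(Q)$ is cyclic (and none otherwise); the field condition forces $|Z(Q)|=p$ whenever it is cyclic, giving at most $\varphi(p)=p-1$ faithful irreducibles in every quotient, i.e.\ the ${\rm CM}_{p-1}$ property.

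The main obstacle is the counting lemma, and in particular the equivalence between the bijection (equality $\#\{\text{faithful }\chi\}=\varphi(|Z(Q)|)$) and the fully ramified condition: the degree identity $\sum_{\psi}\psi(1)^2=|Q:Z|$ does the work, but one must check that all constituents over a faithful central character are faithful and that the GVZ equality on $Q$ transfers to $\chi(1)^2=|G:Z(\chi)|$ on $G$ via $Z(\chi)/\ker\chi=Z(G/\ker\chi)$. The remaining bookkeeping, namely translating $|Z(\chi)/\ker\chi|=p$ into membership in $\mathbb{Q}(\zeta_p)$ and back, is routine once one observes that $\mathbb{Q}(\chi)=\mathbb{Q}(\zeta_{p^{k}})$ for a fully ramified $\chi$ with $|Z(\chi)/\ker\chi|=p^{k}$.
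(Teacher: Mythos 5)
Your proposal is correct and takes essentially the same route as the paper: your surjectivity count of faithful characters over a cyclic center is exactly the paper's Lemma~\ref{Euler}, and the paper's proof of Theorem~\ref{CMp-1} likewise uses $\phi(|Z(\chi)/\ker(\chi)|)\le p-1$ to force $|Z(\chi)/\ker(\chi)|=p$ and identifies the equality case with full ramification (the GVZ condition), with the same exponent argument handling the $\mathbb{Q}_p$ direction. Your fiber-singleton criterion via $\sum_{\psi\in\irr{Q\mid\lambda}}\psi(1)^2=\norm{Q:Z}$ is merely a more explicit rendering of the paper's unique-constituent step.
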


A ${\rm CM}_1$-group as defined above (not to be confused with the definition given in \cite{saeidi}) is often called a ${\rm CM}$-group. As a consequence of Theorem \ref{CMp-1}, we see that a $2$-group is a ${\rm CM}$-group if and only if it is a GVZ-group and a rational group. 

We close this introduction by thanking Professor Mann and Professor Abdollahi for comments regarding the earlier paper ``GVZ-groups'' that has been subsumed by this current paper.

\section{Conjugacy classes of GVZ-groups}\label{gvz section}

As far as we can determine, GVZ-groups were initially studied in \cite{OnoType} under the guise of {\it groups of Ono type}.  In that paper, it was proved that GVZ-groups are nilpotent.   Motivated by problem of Berkovich suggested in \cite{YBGPPOV1}, Nenciu introduced the definition of GVZ-groups in \cite{AN12gvz}.  In \cite{AN12gvz, AN16gvz}, Nenciu studies GVZ-groups that have the additional condition where the centers of the irreducible characters form a chain of subgroups.  Further properties of GVZ-groups can be found in \cite{SBMLnestedclass, ML19gvz}.  

A flat group is one where the equality $\mathrm{cl}_G(g)=g[g,G]$ holds for every $g\in G$. Finite flat groups were initially studied in \cite{flatness}, where a flat group is defined to be one in which every conjugacy class is a coset of a (necessarily normal) subgroup $N$. In Theorem 4.2 of \cite{flatness}, they prove that a (finite) flat nilpotent group is a GVZ-group. Using a different method, we prove that all flat groups are GVZ-groups. Since GVZ-groups are nilpotent, we may conclude that (finite) flat groups are in fact always nilpotent. To relate flat groups to GVZ-groups, we begin with some basic results about the subgroups $[g,G]$ and their connection to centers of characters. 

Our first lemma is an easy proof of the well-known fact that $[g,G]$ is normal in $G$.  Recall from the Introduction that the set $\gamma_G(g)$ for $g\in G$ is defined to be the set of commutators $\{[g,x]\mid x\in G\}$. We note that $\gamma_G (g)$ need not be a normal subset of $G$.  To see this consider $\gamma_{S_4} (1,2)$ and $\gamma_{S_4} (1,3)$.   This is unusual, since usually when we show that a subgroup generated by a subset is normal, we show that the subset is normal. 

\begin{lem} \label{com sub normal}
Let $G$ be a group.  If $g\in G$, then $[g,G]$ is normal in $G$. 
\end{lem}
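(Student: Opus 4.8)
The plan is to prove normality directly from the definition, by showing that conjugation by an arbitrary element $h \in G$ carries the subgroup $N := [g,G]$ into itself. As the paragraph preceding the lemma stresses, we cannot run the usual argument---that a subgroup generated by a set is normal once the generating set is normal---because $\gamma_G(g)$ itself need not be invariant under conjugation. So instead of trying to show that $[g,x]^h$ is again a single commutator of the form $[g,y]$, I would prove the weaker (and correct) statement that $[g,x]^h$ lies in the subgroup $N$ generated by $\gamma_G(g)$.

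The mechanism is a single commutator identity. First I would record that
\[
[g,x]^h = [g,h]^{-1}[g,xh]
\]
for all $g,x,h \in G$; this is verified by a one-line expansion of both sides into words in $g,x,h$ (alternatively, writing $[g,x] = g^{-1}g^x$ makes it transparent). The point is that the right-hand side is a product of the two elements $[g,h]$ and $[g,xh]$, each of which belongs to $\gamma_G(g)$ and hence to $N$. Therefore $[g,x]^h \in N$ for every $x$ and every $h$.

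Since conjugation by $h$ is an automorphism of $G$, the image $N^h$ is generated by the conjugates $[g,x]^h$ of the generators of $N$, and by the previous step each of these lies in $N$; thus $N^h \subseteq N$. Applying the same inclusion with $h$ replaced by $h^{-1}$ gives $N^{h^{-1}} \subseteq N$, i.e. $N \subseteq N^h$, whence $N^h = N$. As $h$ was arbitrary, $N = [g,G]$ is normal in $G$.

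The only genuine obstacle here is conceptual rather than computational: one must resist the reflex to prove that $\gamma_G(g)$ is a normal subset (which is false, as the $S_4$ example indicates) and instead exploit that a product of two commutators already suffices to land back inside the generated subgroup. Once the displayed identity is in hand, the remaining argument is routine.
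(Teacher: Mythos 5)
Your proof is correct and matches the paper's argument essentially verbatim: both rest on the identity $[g,xy]=[g,y][g,x]^y$, rearranged to show $[g,x]^y=[g,y]^{-1}[g,xy]\in[g,G]$. The paper simply stops there (taking for granted that conjugating the generators into $N$ suffices), while you spell out the routine step $N^h\subseteq N$ for all $h$ implies $N^h=N$; this is the same proof, just with more detail.
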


\begin{proof}
Consider elements $x, y \in G$.  It suffices to show that $[g,x]^y \in [g,G]$.  Since $[g,xy] = [g,y] [g,x]^y$, we have $[g,x]^y = [g,y]^{-1} [g,xy]\in[g,G]$.
\end{proof}

We will also need the following result that yields a useful description of the set of irreducible characters containing $[g,G]$ in their kernel.

\begin{lem} \label{cen cond}
Let $G$ be a group.  Fix an element $g \in G$ and a character $\chi \in \irr G$.  Then $g \in Z(\chi)$ if and only if $[g,G] \le \ker (\chi)$.
\end{lem}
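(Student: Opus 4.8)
The plan is to fix a representation $\rho$ affording $\chi$ and translate both conditions into statements about the matrices $\rho(g)$ and $\rho([g,x])$. I would rely on two standard facts from Chapter~2 of \cite{MI76}: that $h \in Z(\chi)$ exactly when $\rho(h)$ is a scalar matrix (the eigenvalues of $\rho(h)$ are roots of unity, so $\norm{\chi(h)} = \chi(1)$ forces them to coincide), and that $h \in \ker(\chi)$ exactly when $\rho(h)$ is the identity matrix.

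For the forward implication, I would assume $g \in Z(\chi)$, so that $\rho(g) = \lambda I$ for some root of unity $\lambda$. Since a scalar matrix commutes with every matrix, for each $x \in G$ I obtain $\rho([g,x]) = \rho(g)^{-1}\rho(x)^{-1}\rho(g)\rho(x) = \rho(g)^{-1}\rho(g)\rho(x)^{-1}\rho(x) = I$. Thus every generator $[g,x]$ of $[g,G]$ lies in $\ker(\chi)$, and because $\ker(\chi)$ is a subgroup this yields $[g,G] \le \ker(\chi)$.

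For the reverse implication, I would assume $[g,G] \le \ker(\chi)$. Then $\rho([g,x]) = I$ for every $x \in G$, which rearranges to $\rho(g)\rho(x) = \rho(x)\rho(g)$; that is, $\rho(g)$ commutes with every matrix in $\rho(G)$. Since $\rho$ is irreducible, Schur's Lemma forces $\rho(g)$ to be scalar, say $\rho(g) = \lambda I$ with $\lambda$ a root of unity, whence $\norm{\chi(g)} = \norm{\lambda}\,\chi(1) = \chi(1)$ and $g \in Z(\chi)$.

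The only substantive ingredient is Schur's Lemma (used in the reverse direction), together with the scalar-matrix characterization of $Z(\chi)$; neither step should present a real obstacle. I note a slicker alternative that bypasses representations: invoking the identity $Z(\chi)/\ker(\chi) = Z(G/\ker(\chi))$ (Lemma~2.27 of \cite{MI76}), membership $g \in Z(\chi)$ is equivalent to $g\ker(\chi)$ being central in $G/\ker(\chi)$, i.e.\ to $[g,x] \in \ker(\chi)$ for all $x \in G$, which---since $\ker(\chi)$ is a subgroup containing the generating set $\gamma_G(g)$---is precisely $[g,G] \le \ker(\chi)$.
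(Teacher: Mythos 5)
Your proof is correct, but your main argument takes a genuinely different route from the paper. The paper's entire proof is the one line you relegate to your closing remark: it simply invokes the standard identity $Z(\chi)/\ker(\chi) = Z(G/\ker(\chi))$, from which the equivalence is immediate since $g\ker(\chi)$ is central in $G/\ker(\chi)$ exactly when every commutator $[g,x]$ lies in $\ker(\chi)$, i.e.\ when the subgroup $[g,G]$ generated by these commutators is contained in $\ker(\chi)$. Your primary argument instead works at the level of a representation $\rho$ affording $\chi$, using the scalar-matrix characterization of $Z(\chi)$ for the forward direction and Schur's Lemma for the reverse; both steps are sound (note that in the reverse direction $\rho([g,x]) = I$ for all $x$ does say precisely that $\rho(g)$ centralizes $\rho(G)$, and irreducibility then forces $\rho(g)$ scalar over $\mathbb{C}$). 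What your route buys is self-containedness: it essentially re-derives the quotient identity from first principles, making the mechanism (scalars commute; commuting with an irreducible image forces scalarity) explicit. What the paper's route buys is brevity --- given Lemma~2.27 of Isaacs, which both proofs ultimately rest on in one form or another, the statement is a tautology, and the paper treats it as such. Your alternative sketch at the end is exactly the paper's proof, so you have in effect supplied both arguments.
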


\begin{proof}
This follows immediately from the definition that states: $Z(\chi)/\ker (\chi) = Z (G/\ker (\chi))$.
\end{proof}

We next present a lemma that is motivated by Lemma 1 of \cite{camina}, Proposition 3.1 of \cite{ChMc}, Lemma 2.1 of \cite{gencam}, and Lemma 2.1 of \cite{mlaiki}.  We also refer the reader to Lemmas 2.1 and 2.2 of the first author's expository paper \cite{mycam}. We also note that  (1) implies (4)  is Lemma 3.1 of \cite{squaringclasses}.  Also, using Theorem 2.2 of \cite{squaringclasses}, one can derive that implication of (4) implies (1) when $|G|$ is odd.

\begin{lem} \label{basics}
Let $M$ be a normal subgroup of $G$ and let $g \in G \setminus M$.  Then the following are equivalent:
\begin{enumerate}[label={\bf(\arabic*)}]
\item $g$ is conjugate to every element in $gM$.
\item For every element $z \in M$, there exists an element $x \in G$ so that $[g,x] = z$.
\item $|C_G (g)| = |C_{G/M} (gM)|$.
\item $\chi (g) = 0$ for every character $\chi \in \irr {G \mid M}$.
\end{enumerate}
\end{lem}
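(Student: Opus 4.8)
The plan is to prove the four conditions equivalent by handling $(1)\Leftrightarrow(2)$ and $(1)\Leftrightarrow(3)$ with elementary coset counting, and then deducing $(3)\Leftrightarrow(4)$ from the second orthogonality relation; this last step is the crux.

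For $(1)\Leftrightarrow(2)$ I would first record that $g^x=g[g,x]$, so that $\mathrm{cl}_G(g)=g\,\gamma_G(g)$ as a set. Writing $gM=\{gz\mid z\in M\}$, the assertion that every element of $gM$ is conjugate to $g$ is precisely the containment $gM\subseteq\mathrm{cl}_G(g)=g\,\gamma_G(g)$, which cancels to $M\subseteq\gamma_G(g)$. That is exactly $(2)$.

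For $(1)\Leftrightarrow(3)$, set $H=\{x\in G\mid[g,x]\in M\}$. Since $M\trianglelefteq G$, the condition $[g,x]\in M$ is equivalent to $xM\in C_{G/M}(gM)$, so $H$ is the full preimage of $C_{G/M}(gM)$ in $G$; in particular $C_G(g)\le H$ and $|H|=|M|\,|C_{G/M}(gM)|$. A conjugate $g^x$ lies in $gM$ exactly when $x\in H$, and $g^x=g^y$ iff $C_G(g)x=C_G(g)y$, so the number of conjugates of $g$ inside $gM$ is $|H:C_G(g)|$, which is at most $|M|$. Condition $(1)$ asserts that these conjugates fill the whole coset, i.e.\ $|H:C_G(g)|=|M|$; dividing by $|M|$ this rearranges to $|C_G(g)|=|C_{G/M}(gM)|$, which is $(3)$.

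The main step, and the one I expect to demand the most care, is $(3)\Leftrightarrow(4)$. I would partition $\irr G=\irr{G/M}\sqcup\irr{G\mid M}$ according to whether $M\le\ker\chi$. The characters with $M$ in their kernel are the inflations of $\irr{G/M}$, and for such $\chi$ the corresponding character $\widehat\chi$ of $G/M$ satisfies $\chi(g)=\widehat\chi(gM)$; hence the second orthogonality relation applied in $G/M$ to the pair $gM,gM$ gives $\sum_{M\le\ker\chi}|\chi(g)|^2=|C_{G/M}(gM)|$. Subtracting this from the second orthogonality relation $\sum_{\chi\in\irr G}|\chi(g)|^2=|C_G(g)|$ in $G$ yields the identity
$$|C_G(g)|=|C_{G/M}(gM)|+\sum_{\chi\in\irr{G\mid M}}|\chi(g)|^2.$$
Every summand on the right is nonnegative, so the centralizer equality $(3)$ holds if and only if each term $|\chi(g)|^2$ vanishes, i.e.\ $\chi(g)=0$ for all $\chi\in\irr{G\mid M}$, which is exactly $(4)$. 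The only delicacy is bookkeeping: correctly identifying the kernel-containing constituents with $\irr{G/M}$ and matching their values at $g$ with values at $gM$, after which the two orthogonality relations combine cleanly. I note that the hypothesis $g\notin M$ is convenient for the intended application but is not actually used in any of these equivalences.
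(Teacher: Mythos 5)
Your proposal is correct and follows essentially the same route as the paper: (1)$\Leftrightarrow$(2) via the identity $g^x=g[g,x]$, (1)$\Leftrightarrow$(3) by elementary coset counting (you count cosets of $C_G(g)$ in the preimage $H$ of $C_{G/M}(gM)$, while the paper counts the union $\bigcup_{x\in G}(gM)^x$ --- equivalent bookkeeping), and (3)$\Leftrightarrow$(4) by subtracting the second orthogonality relation in $G/M$ from the one in $G$, exactly as the paper does. Your closing observation is also accurate: the paper's argument likewise never uses the hypothesis $g\notin M$.
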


\begin{proof}
We first show (1) and (2) are equivalent.  Notice that if $z \in M$, then $g$ and $gz$ are conjugate if and only if there exists $x \in G$ so that $g^x = gz$.  However, we see that $g^x = gz$ if and only if $[g,x] = g^{-1}g^x = g^{-1}gz = z$.  Hence, $g$ will be conjugate to every element in $gM$ if and only if for every element $z \in M$, there is an element $x \in G$ so that $[g,x] = z$.
	
We next show that (1) and (3) are equivalent.  Note that
$$
{\rm cl} (g) \subseteq \bigcup_{x \in G} (gM)^x.
$$
We know that $|G:C_G (g)| = |{\rm cl}_G(g)|$.  On the other hand, $|\bigcup_{x \in G} (gM)^x|$ will equal the number of conjugates of $gM$ times the size of $M$.  Thus, we have the equality $|\bigcup_{x \in G} (gM)^x| = |G/M:C_{G/M} (gM)| |M|$.  It follows that ${\rm cl}_G(g) = \bigcup_{x \in G} (gM)^x$ if and only if $|C_G (g)| = |C_{G/M} (gM)|$.  On the other hand, ${\rm cl}_G(g) = \bigcup_{x \in G} (gM)^x$ if and only if $g$ is conjugate to all elements in $gM$.  This implies that (1) and (3) are equivalent.
	
Now, we show (3) and (4) are equivalent.  By the second Orthogonality relation, we have
$$
|C_G (g)| = \sum_{\chi \in \irr G} |\chi (g)|^2 = \sum_{\chi \in \irr {G/M}} |\chi (g)|^2 + \sum_{\chi \in \irr {G \mid M}} |\chi (g)|^2,
$$
and
$$
|C_{G/M} (gM)| = \sum_{\chi \in \irr {G/M}} |\chi (g)|^2.
$$
This implies that we have $\sum_{\chi \in \irr {G \mid M}} |\chi (g)|^2 = 0$ if and only if the equality $|C_G (g)| = |C_{G/M} (gM)|$ holds.  Since $|\chi (g)|^2$ is a nonnegative real number for all $\chi \in \irr {G \mid M}$, we conclude that $\sum_{\chi \in \irr {G \mid M}} |\chi (g)|^2 = 0$ if and only if $\chi (g) = 0$ for all $\chi \in \irr {G \mid M}$.  This shows that (3) and (4) are equivalent.
\end{proof}

Applying Lemma~\ref{basics} to the normal subgroup $[g,G]$ for an element $g\in G$, we obtain the following Corollary. We remark that this result appears to have been known in \cite{OnoType}, although no proof is given there.  

\begin{cor} \label{basics 1}
Let $G$ be a group and fix an element $g \in G \setminus Z(G)$.  Then the following are equivalent:
\begin{enumerate}[label={\bf(\arabic*)}]
\item $\gamma_G (g)$ is a subgroup of $G$. 
\item $\mathrm{cl}_G (g) = g[g,G]$.
\item $\chi (g) = 0$ for all characters $\chi \in \irr {G}$ satisfying $g\notin Z(\chi)$.
\end{enumerate}
\end{cor}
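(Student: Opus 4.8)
The plan is to obtain the corollary as the special case $M = [g,G]$ of Lemma~\ref{basics}; recall that $[g,G]$ is normal by Lemma~\ref{com sub normal}, so this is a legitimate choice of $M$. Concretely, I would match the three conditions above with parts (2), (1), and (4) of Lemma~\ref{basics} respectively, after two elementary observations and one application of Lemma~\ref{cen cond}.

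First I would record that $\gamma_G(g) \subseteq [g,G]$ always holds, since $[g,G] = \langle \gamma_G(g)\rangle$, and that $\mathrm{cl}_G(g) \subseteq g[g,G]$ always holds, since $g^x = g[g,x] \in g\,\gamma_G(g)$. Left multiplication by $g$ is a bijection carrying $\gamma_G(g)$ onto $\mathrm{cl}_G(g)$ and $[g,G]$ onto $g[g,G]$, so the equalities $\gamma_G(g) = [g,G]$ and $\mathrm{cl}_G(g) = g[g,G]$ are equivalent; moreover $\gamma_G(g) = [g,G]$ holds precisely when $\gamma_G(g)$ is a subgroup (as it then equals the subgroup it generates). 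Thus condition (1) is condition (2) of Lemma~\ref{basics}, and condition (2) is condition (1) of Lemma~\ref{basics}, for $M = [g,G]$. For condition (3) I would invoke Lemma~\ref{cen cond}: since $g \in Z(\chi)$ if and only if $[g,G] \le \ker(\chi)$, the characters $\chi$ with $g \notin Z(\chi)$ are exactly those in $\irr{G \mid [g,G]}$, so condition (3) is verbatim condition (4) of Lemma~\ref{basics}. The stated equivalence of (1), (2), and (4) in Lemma~\ref{basics} then gives the result.

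The hard part — really the only subtlety — is that Lemma~\ref{basics} is stated under the hypothesis $g \in G \setminus M$, while here we assume only $g \notin Z(G)$, and the latter does not force $g \notin [g,G]$ (for example $(1\,2\,3) \in [(1\,2\,3),S_3]$). The clean resolution is to observe that the proof of Lemma~\ref{basics} never actually uses $g \notin M$, so its equivalences are available for $M = [g,G]$ with no restriction. Should one wish to respect the hypothesis literally, the excluded case $g \in [g,G]$ is harmless: there $g[g,G] = [g,G] \ni 1$, so condition (2) would put $1$ into $\mathrm{cl}_G(g)$ and force $g = 1$, against $g \notin Z(G)$; hence (2) fails, and by the bijection above so does (1), while the counting step of Lemma~\ref{basics} (which ties condition (3) to condition (2)) shows (3) fails too, so all three conditions are simultaneously false and hence vacuously equivalent.
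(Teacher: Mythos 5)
Your proposal is correct and takes essentially the same route as the paper's proof: specialize Lemma~\ref{basics} to the normal subgroup $M=[g,G]$ (Lemma~\ref{com sub normal}), identify condition (1) with the subgroup condition via $\mathrm{cl}_G(g)=g\gamma_G(g)$, and translate condition (3) into $\irr{G\mid [g,G]}$ using Lemma~\ref{cen cond}. Your handling of the edge case $g\in[g,G]$ --- which the hypothesis $g\in G\setminus M$ of Lemma~\ref{basics} technically excludes, a point the paper's proof passes over silently --- is a genuine refinement, and both of your resolutions (noting the lemma's proof never uses $g\notin M$, or checking directly that all three conditions fail when $g\in[g,G]$ and $g\neq 1$) are valid.
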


\begin{proof}
Observe that $\mathrm{cl}_G(g)=g\gamma_G(g)\subseteq g[g,G]$, and thus $g$ is flat if and only if $g$ is conjugate to every element of $[g,G]$, which happens if and only if $\gamma_G(g)$ is a subgroup (i.e. coincides with $[g,G]$). By Lemma~\ref{cen cond}, the set of all irreducible characters $\chi$ satisfying $g\notin Z(\chi)$ is exactly $\irr{G\mid[g,G]}$. The result thus follows easily by Lemma~\ref{basics}.
\end{proof}

We may now easily deduce Theorem~\ref{flat = gvz}.

\begin{proof}[Proof of Theorem~\ref{flat = gvz}] Apply the result of Corollary~\ref{basics 1} to every element $g\in G$.
\end{proof}

As Theorem~\ref{flat = gvz} illustrates, GVZ-groups can be characterized in terms of a condition on its conjugacy classes. We mention one more characterization of GVZ-groups in terms of conjugacy classes, although this particular characterization applies only to GVZ-groups of odd order. We thank the referee of an earlier paper entitled ``GVZ-groups'' that has been subsumed by this current paper for suggesting this next result, which follows immediately from a result of Guralnick and Navarro appearing in \cite{squaringclasses}.

\begin{thm}
Let $G$ be a group of odd order. Then $G$ is a GVZ-group if and only if $\mathrm{cl}_G(g)^2$ is a conjugacy class for every element $g\in G$.
\end{thm}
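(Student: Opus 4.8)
The plan is to reduce everything to the flatness characterization already in hand and then analyze the product set $\mathrm{cl}_G(g)^2$ directly. By Theorem~\ref{flat = gvz} together with Corollary~\ref{basics 1}, $G$ is a GVZ-group if and only if $\gamma_G(g)$ is a subgroup of $G$ (equivalently $\gamma_G(g)=[g,G]$) for every $g\in G$. So the real task is to show, for $|G|$ odd, that ``$\mathrm{cl}_G(g)^2$ is a conjugacy class for every $g$'' is equivalent to ``$\gamma_G(g)$ is a subgroup for every $g$.'' The bridge I would establish first is the set identity
\[
\mathrm{cl}_G(g)^2=g^2\,\gamma_G(g)\gamma_G(g),
\]
coming from the commutator computation $g^x g^y=g^2[g,x^g][g,y]$, which uses $g^x=g[g,x]$ and the fact that conjugation by $g$ fixes $g$, so that $[g,x]^g=[g,x^g]\in\gamma_G(g)$. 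Since $g^2\in\mathrm{cl}_G(g)^2$ and this set is closed under conjugation, I would conclude that $\mathrm{cl}_G(g)^2$ is a single class if and only if it equals $\mathrm{cl}_G(g^2)=g^2\gamma_G(g^2)$; by the displayed identity this is exactly the condition $\gamma_G(g)\gamma_G(g)=\gamma_G(g^2)$.

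For the forward direction (GVZ implies the squaring condition), I would assume $G$ is flat, so $\gamma_G(g)=[g,G]$ for each $g$. Because $|G|$ is odd we have $\langle g\rangle=\langle g^2\rangle$, hence $[g,G]=[g^2,G]$ (these commutator subgroups being normal by Lemma~\ref{com sub normal}), and therefore $\gamma_G(g)=[g,G]=[g^2,G]=\gamma_G(g^2)$. As $[g,G]$ is a subgroup it is closed under multiplication, so $\gamma_G(g)\gamma_G(g)=\gamma_G(g)=\gamma_G(g^2)$, and the squaring condition holds at $g$.

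For the reverse direction (the squaring condition implies GVZ), I would use the criterion $\gamma_G(g)\gamma_G(g)=\gamma_G(g^2)$ for every $g$. Since $1=[g,1]\in\gamma_G(g)$, this already gives $\gamma_G(g)\subseteq\gamma_G(g^2)$ for every $g$. Iterating along $g,g^2,g^4,\dots$ and using that $g$ has odd order (so $g^{2^k}=g$ for some $k$) closes up the chain $\gamma_G(g)\subseteq\gamma_G(g^2)\subseteq\cdots\subseteq\gamma_G(g^{2^k})=\gamma_G(g)$, forcing $\gamma_G(g)=\gamma_G(g^2)$. Feeding this back into the criterion yields $\gamma_G(g)\gamma_G(g)=\gamma_G(g)$, so $\gamma_G(g)$ is a nonempty finite subset closed under multiplication, hence a subgroup. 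Thus every $g$ is flat, and $G$ is a GVZ-group by Theorem~\ref{flat = gvz}.

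The step I expect to be the main obstacle is pinning down the identity $\mathrm{cl}_G(g)^2=g^2\,\gamma_G(g)\gamma_G(g)$ cleanly and recognizing that the odd-order hypothesis enters twice and essentially: once to identify $[g,G]$ with $[g^2,G]$, and once to run the closing-up iteration $g^{2^k}=g$ that upgrades the inclusion $\gamma_G(g)\subseteq\gamma_G(g^2)$ to an equality. Everything else is bookkeeping with the equivalences in Lemma~\ref{basics} and Corollary~\ref{basics 1}. I would also note that the reverse implication is precisely where one could instead invoke the Guralnick--Navarro results in \cite{squaringclasses} (passing from the squaring condition to condition (4) of Lemma~\ref{basics}, and then applying (4) $\Rightarrow$ (1)), but the elementary closing-up argument above avoids that machinery.
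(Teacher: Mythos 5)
Your proof is correct, and it takes a genuinely different route from the paper's. The paper's own proof is a three-line reduction: it observes $C_G(g)\le C_G(g^2)$, notes that odd order forces $\norm{\mathrm{cl}_G(g)}=\norm{\mathrm{cl}_G(g^2)}$, and then outsources the entire equivalence to Theorem 2.2 of Guralnick--Navarro \cite{squaringclasses}, a \emph{local} (one-class-at-a-time) statement whose proof is itself nontrivial. You instead give a self-contained elementary argument, and every step checks out: the identity $g^xg^y = g^2[g,x^g][g,y]$ is correct (using $[g,x]^g=[g^g,x^g]=[g,x^g]$), and since $x\mapsto x^g$ is a bijection of $G$ it yields both inclusions of $\mathrm{cl}_G(g)^2=g^2\,\gamma_G(g)\gamma_G(g)$; since $\mathrm{cl}_G(g)^2$ is conjugation-invariant and contains $g^2$, being a single class is indeed equivalent to $\gamma_G(g)\gamma_G(g)=\gamma_G(g^2)$. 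Your forward direction correctly deduces $[g,G]=[g^2,G]$ from $\langle g\rangle=\langle g^2\rangle$ via the identity $[g^2,x]=[g,x]^g[g,x]$ and the normality supplied by Lemma~\ref{com sub normal}, and your reverse direction is the cleverest part: the doubling chain $\gamma_G(g)\subseteq\gamma_G(g^2)\subseteq\cdots\subseteq\gamma_G(g^{2^k})=\gamma_G(g)$ (with $2^k\equiv 1\pmod{\norm{g}}$, available exactly because $\norm{g}$ is odd) forces $\gamma_G(g)\gamma_G(g)=\gamma_G(g)$, making $\gamma_G(g)$ a finite multiplicatively closed set, hence a subgroup, so $G$ is flat and Theorem~\ref{flat = gvz} finishes. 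The structural point worth noting is that your iteration works precisely because the theorem's hypothesis is \emph{global} --- it holds at every element, in particular at all the powers $g^{2^i}$ --- whereas Guralnick--Navarro must prove a statement about a single class and accordingly need heavier tools; so your argument proves less in isolation but is fully sufficient here, and it buys independence from \cite{squaringclasses} at the cost of a page of bookkeeping that the paper's citation avoids.
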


\begin{proof}
Let $g\in G$. It is not difficult to see that $C_G(g)\le C_G(g^2)$. Since $G$ has odd order, $\norm{\mathrm{cl}_G(g^2)}=\norm{\mathrm{cl}_G(g)}$. Thus the result follows immediately from \cite[Theorem 2.2]{squaringclasses}.
\end{proof}

\section{A Taketa analog}

In this section, we give a proof of Theorem~\ref{taketa}. Before doing so, we discuss the derived length and nilpotence class of GVZ-groups. In Section 5 of \cite{AN16gvz}, Nenciu constructs GVZ $p$-groups of arbitrarily large nilpotence class with fixed exponent $p$. In Example 3 in Section 8 of \cite{ML19gvz}, Lewis gives a different construction, which illustrates the existence of GVZ-groups of arbitrarily high nilpotence class and exponent. One may verify that each of these examples has derived length two. It is unknown if there exist GVZ-groups of arbitrarily high derived length. On page 3218 of \cite{YB00}, Berkovich states (with no supporting explanation) that the derived length of a GVZ-group is probably bounded.  We will also see in Section \ref{CMgroups} that it has been conjectured in the special case studied in that section that those groups are metabelian.

We now show that the nilpotence class of a GVZ-group can be bounded in terms of the number of distinct degrees of its irreducible characters, denoted $\mathrm{cd}(G)$. For convenience, we set some notation.  We let $G_i$ denote the $i^{\rm th}$ member of the {\it lower central series}. That is, we set $G_1 = G$ and we inductively define $G_{i+1} = [G_i,G]$ for every integer $i \ge 1$.  For a nilpotent group $G$, recall that $c(G)$ is the nilpotence class of $G$ --- the smallest integer for which $G_{c(G)+1}=1$.  The reader should compare this proof with the proof of Taketa's theorem (see \cite[Theorem5.12]{MI76}).  

\begin{proof}[Proof of Theorem~\ref{taketa}]
Let $1 = d_1 < d_2 < \dotsb < d_n$ be the distinct degrees in $\cd G$.  We work by induction on $\norm{G}$. If $G$ is abelian, then $c(G) = 1 = \norm{\cd G}$, and the result holds.  Thus, we may assume that $G$ is nonabelian.  Consider a character $\chi \in \irr G$.  If $\ker(\chi) > 1$, then $\norm{G/\ker(\chi)} < \norm {G}$ and $\cd {G/\ker (\chi)} \subseteq \cd G$.  By the inductive hypothesis, we have that $G_n \le \ker (\chi)$. Thus, if $G$ does not have a faithful irreducible character, then $G_n \le \bigcap_{\chi \in \irr G} \ker (\chi) = 1$. Therefore, we may assume that there exists a character $\chi \in \irr G$ with $\ker (\chi) = 1$. This implies that $Z (\chi) = Z (G)$. We have $d_i^2 \le \norm{G:Z(G)} = \chi(1)^2$ for ever integer $i$ with $1 \le i \le n$, and so, $\chi (1) = d_n$. Notice that if $a \in \cd {G/Z(G)}$, then $a^2 < \norm {G:Z(G)} = d_n^2$. It follows that $\norm{\cd {G/Z(G)}} \le n-1$. By the inductive hypothesis, this implies $G_{n-1}\le Z(G)$.  We conclude that $G_n=1$, as desired. 
\end{proof}

\section{${\rm CM}_n$-groups and GVZ-groups}\label{CMgroups}

As we stated in the Introduction, ${\rm CM}_n$-groups were defined in \cite{BZ}.  So far as we can tell, ${\rm CM}$-groups were initially studied in \cite{zhmud}.  The reader may also want to consult \cite{zhmud1} for further results regarding these groups.  We begin with a lemma that gives a lower bound on the number of faithful characters of a group.

\begin{lem} \label{Euler}
Let $G$ be a $p$-group and suppose that $Z(G)$ is cyclic, then the number of faithful characters in $\irr G$ is at least $\phi (|Z(G)|)$ where $\phi$ is the Euler $\phi$-function.
\end{lem}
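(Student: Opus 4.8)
The plan is to set up a bijective-style correspondence between faithful irreducible characters of $G$ and faithful linear characters of the cyclic group $Z(G)$, and then count the latter using the Euler $\phi$-function. First I would recall that since $Z(G)$ is central, Schur's lemma forces the restriction of any $\chi \in \irr G$ to the center to be a scalar multiple of a linear character: there is a unique $\lambda_\chi \in \irr{Z(G)}$ with $\chi_{Z(G)} = \chi(1)\lambda_\chi$. A central element $z$ lies in $\ker(\chi)$ precisely when it is represented by the identity matrix, which for central $z$ is equivalent to $\lambda_\chi(z) = 1$; hence $\ker(\chi) \cap Z(G) = \ker(\lambda_\chi)$.

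The crucial step is to show that $\chi$ is faithful whenever $\lambda_\chi$ is faithful. Here I would invoke the standard fact that in a $p$-group every nontrivial normal subgroup meets the center nontrivially. Since $\ker(\chi) \trianglelefteq G$, if $\chi$ were not faithful then $\ker(\chi) \cap Z(G) \neq 1$; but $\ker(\chi)\cap Z(G)=\ker(\lambda_\chi)=1$ when $\lambda_\chi$ is faithful, a contradiction. Thus faithfulness of $\lambda_\chi$ forces faithfulness of $\chi$. This is where the hypothesis that $G$ is a $p$-group is genuinely used, and I expect it to be the main obstacle of the argument.

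Next I would argue that every faithful linear character $\lambda$ of $Z(G)$ arises as $\lambda_\chi$ for some $\chi \in \irr G$. By Frobenius reciprocity the induced character $\lambda^G$ is nonzero, so it has an irreducible constituent $\chi$ lying over $\lambda$, which means $\lambda_\chi = \lambda$; by the previous step this $\chi$ is faithful. Since the restriction $\chi_{Z(G)} = \chi(1)\lambda_\chi$ recovers $\lambda_\chi$ from $\chi$, distinct faithful $\lambda$ yield distinct faithful $\chi$, so the assignment $\lambda \mapsto \chi$ is injective.

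Finally, because $Z(G)$ is cyclic of order $|Z(G)|$, its faithful linear characters are exactly the generators of its (cyclic) character group, and there are precisely $\phi(|Z(G)|)$ of them. The injection in the previous paragraph then produces at least $\phi(|Z(G)|)$ distinct faithful characters in $\irr G$, as desired.
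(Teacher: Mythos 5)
Your proof is correct and takes essentially the same approach as the paper: induce each faithful $\lambda \in \irr{Z(G)}$ to get an irreducible constituent $\chi$ with $\ker(\chi) \cap Z(G) = \ker(\lambda) = 1$, conclude $\chi$ is faithful, use the uniqueness of the central character $\lambda_\chi$ for injectivity, and count $\phi(|Z(G)|)$ faithful characters of the cyclic center. The only difference is that you make explicit two steps the paper leaves implicit, namely the Schur's-lemma description $\chi_{Z(G)} = \chi(1)\lambda_\chi$ and the $p$-group fact that a nontrivial normal subgroup meets the center nontrivially.
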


\begin{proof}
Note that for each faithful character $\lambda \in \irr {Z(G)}$, we see that $\lambda^G$ has at least one irreducible constituent $\chi$.  Note that $\ker(\chi) \cap Z(G) = \ker \lambda = 1$, and so $\ker(\chi) = 1$.  Hence, $\chi$ is a faithful character.  On the other hand, any character in $\irr G$ will have a character of $Z(G)$ as its unique irreducible constituent when restricted to $Z(G)$.  This implies that the number of faithful characters of $G$ is at least the number of faithful irreducible characters of $Z(G)$.  Since $Z(G)$ is cyclic, the number of faithful irreducible characters equals $\phi (|Z(G)|)$.
\end{proof}

Recall that $G$ is a ${\rm CM}_n$-group if for every normal subgroup $N$, there are at most $n$ characters in $\irr G$ that have $N$ as their kernel.  Note for every group $G$, there is a minimal positive integer $n$ so that $G$ is a ${\rm  CM}_n$-group.

\begin{lem} \label{p-1}
If $p$ is a prime, $G$ is a $p$-group, and $G$ is a ${\rm CM}_n$-group, then $n \ge p-1$.
\end{lem}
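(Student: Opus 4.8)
The plan is to exhibit a single normal subgroup of $G$ that occurs as the kernel of at least $p-1$ distinct irreducible characters; since $G$ is a ${\rm CM}_n$-group, the definition then forces $n \ge p-1$ immediately. The subgroup I have in mind is a normal subgroup $M$ of index $p$, and the point will be that the quotient $G/M$ is cyclic of order $p$ and therefore carries exactly $p-1$ faithful irreducible characters, which inflate to $p-1$ characters of $G$ with common kernel $M$.

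First I would secure the existence of $M$. We may assume $G$ is nontrivial (otherwise the statement is vacuous). A nontrivial $p$-group has a maximal subgroup, and maximal subgroups of a $p$-group are normal of index $p$; so there is a normal subgroup $M \trianglelefteq G$ with $G/M$ cyclic of order $p$. Note that no nonabelianness is needed here, so the abelian case requires no separate treatment.

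The key step is then to count the irreducible characters of $G$ having kernel exactly $M$. Under inflation along the quotient map $G \to G/M$, these correspond precisely to the faithful irreducible characters of $G/M$. Since $G/M$ is cyclic of order $p$, its center is all of $G/M$ and hence cyclic, so Lemma \ref{Euler} applies to $G/M$ and guarantees at least $\phi(p) = p-1$ faithful characters; concretely these are the $p-1$ nontrivial linear characters of the cyclic group of order $p$, each of which is visibly faithful. Inflating them produces $p-1$ distinct irreducible characters of $G$ whose kernel is exactly $M$, which is what we want.

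I do not expect a genuine obstacle here. The only point requiring care is the standard bookkeeping in the correspondence between $\irr{G/M}$ and the irreducible characters of $G$ containing $M$ in their kernel: one must note that a \emph{faithful} character of $G/M$ inflates to a character of $G$ whose kernel is precisely $M$ (rather than merely containing $M$), and that distinct faithful characters of $G/M$ inflate to distinct characters of $G$. Both are immediate from that correspondence, so the argument should go through cleanly.
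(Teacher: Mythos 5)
Your argument is correct, but it takes a genuinely different route from the paper's. The paper applies Lemma~\ref{Euler} to $G/N$ for an \emph{arbitrary} character kernel $N=\ker(\chi)$: since $Z(\chi)/N = Z(G/N)$ is a cyclic $p$-group, nontrivial when $\chi \ne 1_G$, one gets $\phi(\norm{Z(\chi)/N}) \ge p-1$, so \emph{every} normal subgroup occurring as a kernel is in fact the kernel of at least $p-1$ irreducible characters. You instead produce a single witness: a maximal subgroup $M$, normal of index $p$, whose cyclic quotient $G/M$ carries exactly $p-1$ nontrivial linear characters, each visibly faithful and inflating to a character of $G$ with kernel exactly $M$. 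Your version is more elementary --- as you note, the faithful characters are explicit, so you do not really need Lemma~\ref{Euler} at all --- and it suffices for the statement as given. What the paper's version buys is the stronger local fact $\phi(\norm{Z(\chi)/\ker(\chi)}) \ge p-1$ at every kernel, which is reused verbatim in the proof of Theorem~\ref{CMp-1}, so the paper's proof does double duty where yours would have to be redone there. One small shared blemish: for the trivial group the statement is actually \emph{false} for $p \ge 3$ (the trivial group is a ${\rm CM}_1$-group), not vacuous as you say; both your proof and the paper's implicitly assume $G$ nontrivial (the paper needs a nonprincipal $\chi$ for $Z(\chi)/N$ to be nontrivial), so this does not distinguish the two arguments.
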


\begin{proof}
If $N = \ker(\chi)$ for some $\chi \in \irr G$, then $Z(\chi)/N = Z(G/N)$ is cyclic.  By Lemma \ref{Euler}, we know that $G/N$ has at least $\phi (|Z(\chi)/N|)$ faithful irreducible characters.  Since $Z(\chi)/N$ is a $p$-group, we know $\phi (|Z(\chi)/N|) \ge p-1$.
\end{proof}

If $p$ is a prime, then we write $\mathbb{Q}_p$ for the field obtained by adjoining a $p$th root of unity to the rationals.  We now characterize the $p$-groups that are ${\rm CM}_{p-1}$-groups in the following theorem which includes Theorem \ref{CMp-1-intro} from the Introduction.  We note that the equivalence of (1) and (2) is essentially proved in Theorem 9.3.16 of \cite{BZ}, but our proof seems to be considerably shorter.  

\begin{thm} \label{CMp-1}
Let $p$ be a prime and let $G$ be a $p$-group.  Then the following are equivalent:
\begin{enumerate}[label={\normalfont\bf(\arabic*)}]
\item $G$ is a ${\rm CM}_{p-1}$-group.
\item $G$ is a GVZ-group and $|Z(\chi)/\ker(\chi)| = p$ for all $1_G \ne \chi \in \irr G$.
\item $G$ is a GVZ-group and every character in $\irr G$ has values in $\mathbb{Q}_p$.
\end{enumerate}
\end{thm}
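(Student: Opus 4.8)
The plan is to establish the equivalence (2) $\Leftrightarrow$ (3) first, as a computation of fields of values, and then to link (1) and (2) through two counting arguments resting on the identity $\sum_{\chi \in \irr{G \mid \lambda}} \chi(1)^2 = |G : Z|$, valid for a linear character $\lambda$ of a central subgroup $Z$ of $G$; this is immediate from Frobenius reciprocity together with $\chi|_Z = \chi(1)\nu_\chi$ for the central character $\nu_\chi$. I will repeatedly use the equality case recalled in the Introduction: in a GVZ-group $\chi(g) = 0$ for every $g \notin Z(\chi)$.

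To see (2) $\Leftrightarrow$ (3), assume $G$ is a GVZ-group, fix $1_G \ne \chi \in \irr G$, and set $N = \ker(\chi)$, so that $\bar Z := Z(\chi)/N = Z(G/N)$ is cyclic because $G/N$ carries the faithful character $\bar\chi$. Writing $\bar\chi|_{\bar Z} = \chi(1)\mu$ with $\mu$ a faithful linear character of $\bar Z$, the image of $\mu$ is the full group of $|\bar Z|$-th roots of unity; by the GVZ vanishing, the nonzero values of $\chi$ are exactly the numbers $\chi(1)\mu(\bar g)$ with $\bar g \in \bar Z$, so the field generated by the values of $\chi$ is $\mathbb{Q}(\zeta_{|Z(\chi)/N|})$. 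Since $|Z(\chi)/N| = p^k$ with $k \ge 1$ and $\mathbb{Q}(\zeta_{p^k}) \subseteq \mathbb{Q}_p$ precisely when $k \le 1$, the field of values lies in $\mathbb{Q}_p$ if and only if $|Z(\chi)/N| = p$, which is exactly the equivalence of (2) and (3).

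For (2) $\Rightarrow$ (1), let $N$ be a normal subgroup of $G$. If $N$ is the kernel of no nontrivial character, then $N$ is the kernel of at most the one character $1_G$, so the bound $p-1$ holds. Otherwise $\bar Z = Z(G/N)$ is cyclic of order $p$ by (2). The characters of $G$ with kernel $N$ are the faithful characters of $G/N$, and every such character lies over one of the $\phi(p) = p-1$ faithful linear characters $\lambda$ of $\bar Z$; moreover every constituent of $\lambda^{G/N}$ is faithful (its kernel meets $\bar Z$ trivially inside a $p$-group) and thus has $Z(\chi) = \bar Z$, whence $\chi(1)^2 = |G/N : \bar Z|$ by GVZ. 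Comparing with $\sum_{\chi \mid \lambda}\chi(1)^2 = |G/N : \bar Z|$ forces a single character over each $\lambda$, so $N$ is the kernel of exactly $p-1$ characters and $G$ is a ${\rm CM}_{p-1}$-group.

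For (1) $\Rightarrow$ (2), the condition $|Z(\chi)/\ker(\chi)| = p$ for nontrivial $\chi$ is immediate from Lemma~\ref{Euler}: writing $N = \ker(\chi)$, the cyclic center $Z(\chi)/N$ of $G/N$ yields $\phi(|Z(\chi)/N|)$ faithful characters of $G/N$, and these are among the at most $p-1$ characters of $G$ with kernel $N$, so $\phi(|Z(\chi)/N|) \le p-1$ and hence $|Z(\chi)/N| = p$. For GVZ I induct on $|G|$, noting that quotients of ${\rm CM}_{p-1}$-groups are again ${\rm CM}_{p-1}$-groups; a character with nontrivial kernel inherits the GVZ equality from $G/\ker(\chi)$, so only a faithful $\chi$ remains, for which $Z(\chi) = Z(G)$ has order $p$ by the previous sentence. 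Each of the $p-1$ faithful linear characters of $Z(G)$ lies under at least one (necessarily faithful) character of $G$, while (1) permits at most $p-1$ faithful characters in all; hence exactly one lies over each, and the degree identity again gives $\chi(1)^2 = |G : Z(G)| = |G : Z(\chi)|$. The main obstacle is exactly this GVZ half of (1) $\Rightarrow$ (2): one must see that the ${\rm CM}_{p-1}$ bound is tight enough to force a unique irreducible constituent over each faithful central character, the fully-ramified rigidity that the degree identity then converts into the GVZ equality.
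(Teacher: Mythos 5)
Your proposal is correct, and its core counting is the paper's own: the lower bound $\phi(|Z(\chi)/\ker(\chi)|)$ from Lemma~\ref{Euler} played against the ${\rm CM}_{p-1}$ upper bound of $p-1$ characters with a given kernel, forcing $|Z(\chi)/\ker(\chi)| = p$ and then, by pigeonhole over the $p-1$ faithful linear characters of the central quotient, a unique irreducible constituent over each. But you organize and finish the argument differently. The paper proves the cycle (1)$\Rightarrow$(2)$\Rightarrow$(3)$\Rightarrow$(1), so it only obtains (3)$\Rightarrow$(2) through (1); you prove (2)$\Leftrightarrow$(3) directly by computing the full field of values $\mathbb{Q}(\zeta_{|Z(\chi)/\ker(\chi)|})$ of a GVZ character, which is slightly more than the paper extracts (its (3)$\Rightarrow$(1) step only needs that $\mathbb{Q}_p$-values force $Z(\chi)/\ker(\chi)$ to have exponent $p$), but is clean and self-contained. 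More substantively, where the paper passes from ``$\chi$ is the unique constituent of $\gamma^G$'' to the GVZ property via vanishing off $Z(\chi)$ (``it is not difficult to see\dots''), you make the mechanism explicit with the identity $\sum_{\chi\in\irr{G\mid\lambda}}\chi(1)^2 = |G:Z|$ for $\lambda$ linear on a central $Z$, and you use it in both directions: uniqueness of the constituent yields $\chi(1)^2 = |G:Z(\chi)|$ outright, with no detour through the vanishing criterion, and conversely in your (2)$\Rightarrow$(1) the GVZ equality saturates the sum and forces uniqueness, exactly the fully-ramified rigidity the paper invokes implicitly. One point where your route is more roundabout: the induction on $|G|$ in (1)$\Rightarrow$(2), resting on quotient-closure of the ${\rm CM}_{p-1}$ property, is sound but unnecessary, since the paper reaches the same conclusion for every $\chi$ at once by running the counting inside $G/\ker(\chi)$, which sidesteps both the induction and the (easy) quotient-closure check. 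Net effect: same skeleton and the same two counting bounds, with your degree-sum identity trading the paper's appeal to the equality case of $\chi(1) \le |G:Z(\chi)|^{1/2}$ for an explicit Frobenius-reciprocity computation.
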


\begin{proof}
Suppose that $G$ is a ${\rm CM}_{p-1}$-group.  Consider a character $1_G \ne \chi \in \irr G$.  By Lemma \ref{Euler}, we know that $G/\ker(\chi)$ has at least $\phi (|Z(\chi)/\ker(\chi)|)$ faithful irreducible characters.  As we saw in Lemma \ref{p-1}, we have $\phi (Z(\chi)/\ker(\chi)) \ge p -1$.  Since $G/\ker (\chi)$ has at most $p-1$ faithful irreducible characters, we deduce that $\phi (|Z(\chi)/\ker(\chi)|) \le p-1$, and thus, $\phi (|Z(\chi)/\ker(\chi)|) = p-1$.  It is well-known that $\phi (|Z(\chi)/\ker(\chi)|) = p-1$ if and only if $|Z(\chi)/\ker(\chi)| = p$.  We now have that $Z (\chi)/\ker(\chi)$ has $p-1$ nonprincipal irreducible characters.  Since $G/\ker(\chi)$ has at most $p-1$ faithful irreducible characters, we conclude that $\gamma^G$ has a unique irreducible constituent for each character $\gamma \in \irr {Z(\chi)/\ker(\chi)}$.  Observe that $\chi$ is a constituent of $\gamma^G$ for such a character $\gamma$, and it is not difficult to see that this implies that $\chi$ vanishes on $G \setminus Z(\chi)$.  Since $\chi$ was arbitrary, this implies that $G$ is a GVZ-group and $|Z(\chi)/\ker(\chi)| = p$ for all $1_G \ne \chi \in \irr G$.
	
Now, suppose that $G$ is a GVZ-group and $|Z(\chi)/\ker(\chi)| = p$ for all $1_G \ne \chi \in \irr G$.  Consider a character $\chi \in \irr G$.  We see that all the nonzero values of $\chi$ are on elements of $Z (\chi)$ and since $|Z(\chi)/\ker(\chi)| = p$, it follows that all the values of $\chi$ lie in $\mathbb{Q}_p$.
	
Finally, suppose that $G$ is a GVZ-group and every character in $\irr G$ has values in $\mathbb{Q}_p$.  Let $N$ be a normal subgroup of $G$.  If no irreducible character of $G$ has $N$ as a kernel, then the result is true with respect to $N$.  Thus, we may assume that there exists $\chi \in \irr G$ so that $\ker(\chi) = N$.  Observe that $Z(\chi)/N = Z(G/N)$ is cyclic.  Since $\chi$ has values in $\mathbb{Q}_p$, it follows that $Z(\chi)/N$ must have exponent $p$, and so, $Z(\chi)/N$ has order $p$.  Now, every irreducible character in $\irr G$ that has $N$ as its kernel will have a unique character in $\irr {Z(\chi)/N}$ as an irreducible constituent.  Since $G$ is a GVZ-group, we see that each irreducible character in $Z(\chi)/N$ has a unique irreducible constituent upon induction to $G$.  Because $Z(\chi)/N$ has $p-1$ irreducible characters, this shows that there exists $p-1$ irreducible characters of $G$ that have $N$ as their kernel and this proves the result.
\end{proof}

It is easy to find examples to see that if $m > 1$, then the direct product of two ${\rm CM}_n$-groups need not be a ${\rm CM}_n$-group.  However, when $n = p-1$ and the groups are $p$-groups, then the story is different.

\begin{cor}
Let $p$ be a prime and suppose that $H$ and $K$ are ${\rm CM}_{p-1}$-groups that are $p$-groups.  Then $H \times K$ is a ${\rm CM}_{p-1}$-group.
\end{cor}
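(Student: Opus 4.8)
The plan is to deduce the corollary from the characterization in Theorem~\ref{CMp-1}, exploiting the equivalence of conditions (1) and (3). Since $H$ and $K$ are $p$-groups, so is $G = H \times K$, and it therefore suffices to show that $G$ is a GVZ-group all of whose irreducible characters take values in $\mathbb{Q}_p$. The backbone of the argument is the standard description of the irreducible characters of a direct product: every member of $\irr G$ has the form $\chi \times \psi$ (the outer tensor product) with $\chi \in \irr H$ and $\psi \in \irr K$, where $(\chi \times \psi)(h,k) = \chi(h)\psi(k)$.

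Granting this description, the value condition is almost immediate. By Theorem~\ref{CMp-1} applied to the hypotheses on $H$ and $K$, each $\chi(h)$ and each $\psi(k)$ lies in $\mathbb{Q}_p$; since $\mathbb{Q}_p$ is a field, the product $\chi(h)\psi(k)$ again lies in $\mathbb{Q}_p$, so every irreducible character of $G$ has values in $\mathbb{Q}_p$. For the GVZ condition, the key identity to establish is $Z(\chi \times \psi) = Z(\chi) \times Z(\psi)$. This follows from $\norm{(\chi\times\psi)(h,k)} = \norm{\chi(h)}\,\norm{\psi(k)}$ together with the bounds $\norm{\chi(h)} \le \chi(1)$ and $\norm{\psi(k)} \le \psi(1)$, which force equality in the product to occur coordinatewise. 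With this in hand one computes
\[
(\chi\times\psi)(1)^2 = \chi(1)^2 \psi(1)^2 = \norm{H : Z(\chi)}\,\norm{K : Z(\psi)} = \norm{G : Z(\chi\times\psi)},
\]
the middle equality using that $H$ and $K$ are themselves GVZ-groups (again by Theorem~\ref{CMp-1}). Thus $G$ is a GVZ-group, and a final application of Theorem~\ref{CMp-1} to $G$ yields that $G$ is a ${\rm CM}_{p-1}$-group.

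I expect the only genuine point needing care to be the justification of the two structural facts about direct products—the outer-tensor-product description of $\irr G$ and the identity $Z(\chi\times\psi) = Z(\chi)\times Z(\psi)$—both of which are standard but should be invoked explicitly. It is worth emphasizing that routing the proof through the value-field condition (3) is what keeps it clean: character values multiply across a direct product and $\mathbb{Q}_p$ is closed under multiplication, so the condition transfers effortlessly, whereas tracking the invariant $\norm{Z(\chi)/\ker(\chi)}$ of condition (2) directly would require a more delicate kernel computation on $Z(\chi)\times Z(\psi)$. This multiplicativity, special to the case $n = p-1$, is precisely what distinguishes the present situation from the general failure of direct products to preserve the ${\rm CM}_n$-property noted just before the statement.
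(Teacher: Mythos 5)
Your proof is correct and takes essentially the same route as the paper's: both deduce the corollary from the equivalence of conditions (1) and (3) in Theorem~\ref{CMp-1}, noting that a direct product of GVZ-groups is a GVZ-group and that character values of $H \times K$ are products of values on $H$ and $K$, hence lie in $\mathbb{Q}_p$. The only difference is that you explicitly justify the standard facts the paper leaves as observations, namely the outer tensor description of $\irr{H \times K}$ and the identity $Z(\chi \times \psi) = Z(\chi) \times Z(\psi)$.
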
 

\begin{proof}
Observe since $H$ and $K$ are GVZ-groups that $H \times K$ is a GVZ-group.  Also, since all characters in $\irr H$ and $\irr K$ have values in $\mathbb{Q}_p$, it follows that all characters in $\irr {H \times K}$ have values in $\mathbb{Q}_p$.  Applying Lemma \ref{CMp-1}, we see that $H \times K$ is a $\mathrm{CM}_{p-1}$-group. 
\end{proof}

We say $G$ a ${\rm CM}$-group if $G$ is a ${\rm CM}_1$-group. (We note that \cite{saeidi} uses a somewhat different definition for ${\rm CM}_1$-groups.)  Following the usual convention in the literature, we say $G$ is a {\it rational} group if all of the irreducible characters of $G$ are rational.  It is not difficult to see that this is equivalent to the condition that $g$ is conjugate to $g^r$ for every integer $r$ satisfying $(r, \norm{G}) = 1$. We note on page 251 of \cite{BZ} it is mentioned that all ${\rm CM}$-groups are rational.  Also, in Lemma 1.2 of \cite{saeidi}, it is proved that ${\rm CM}$-groups that are $2$-groups are rational. Several of the other results \cite{saeidi} also suggest that $G$ will be a GVZ-group, which turns out to be the case.

\begin{cor}\label{CM}
Let $G$ be a $2$-group.  Then $G$ is a ${\rm CM}$-group if and only if $G$ is a GVZ-group and rational group.
\end{cor}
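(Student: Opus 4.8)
The plan is to obtain Corollary \ref{CM} as the special case $p = 2$ of Theorem \ref{CMp-1}, after reconciling the two running hypotheses: ``${\rm CM}$-group'' means ``${\rm CM}_1$-group'' $= {\rm CM}_{p-1}$-group with $p=2$, and the condition ``values lie in $\mathbb{Q}_2$'' must be identified with ``rational''. First I would note that $\mathbb{Q}_2 = \mathbb{Q}$, since adjoining a primitive $2$nd root of unity (namely $-1$) to $\mathbb{Q}$ returns $\mathbb{Q}$ itself. Consequently, the statement ``every character in $\irr G$ has values in $\mathbb{Q}_2$'' is literally the statement that every irreducible character of $G$ is rational-valued, i.e. that $G$ is a rational group in the sense defined just above the corollary. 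With this identification in hand, the equivalence of (1) and (3) in Theorem \ref{CMp-1}, specialized to $p = 2$, reads precisely: $G$ is a ${\rm CM}_1$-group if and only if $G$ is a GVZ-group and every irreducible character of $G$ is rational; that is, $G$ is a ${\rm CM}$-group if and only if $G$ is a GVZ-group and a rational group.

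The only genuine step, then, is to verify the field identification $\mathbb{Q}_2 = \mathbb{Q}$ and to confirm that ``all characters rational-valued'' is the correct unwinding of the rationality hypothesis. The latter is immediate from the definition recalled in the text: $G$ is rational precisely when all of its irreducible characters are rational, which is what having all values in $\mathbb{Q} = \mathbb{Q}_2$ asserts. I would therefore write the proof as a direct appeal to Theorem \ref{CMp-1} with $p=2$, inserting a single sentence observing $\mathbb{Q}_2 = \mathbb{Q}$ so that condition (3) of that theorem becomes the rationality condition.

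I do not anticipate a real obstacle here, since the corollary is a transparent specialization. The only point requiring any care is the bookkeeping around the definition of ${\rm CM}$-group: the paper flags that \cite{saeidi} uses a different definition of ${\rm CM}_1$-group, so I would make sure to invoke the paper's own definition (${\rm CM}_n$ meaning ``each normal subgroup is the kernel of at most $n$ irreducible characters'') when equating ${\rm CM}$-group with ${\rm CM}_1 = {\rm CM}_{p-1}\rvert_{p=2}$. Beyond that, no induction, no character-theoretic computation, and no appeal to the GVZ/flat machinery is needed that is not already packaged inside Theorem \ref{CMp-1}; the corollary follows in one line.
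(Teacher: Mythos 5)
Your proposal is correct and matches the paper's intent exactly: the paper states Corollary~\ref{CM} without a written proof precisely because it is the $p=2$ specialization of Theorem~\ref{CMp-1}, with the observation that $\mathbb{Q}_2 = \mathbb{Q}$ turning condition (3) into ``GVZ and rational.'' Your extra care about the definitional bookkeeping (${\rm CM} = {\rm CM}_1$ in the paper's sense, not that of \cite{saeidi}) is appropriate and consistent with the paper's own remarks.
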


Appealing to Theorem~\ref{flat = gvz}, Corollary~\ref{CM} can be considered a group-theoretic characterization of those $2$-groups that are ${\rm CM}$-groups. In a private communication, Professor Mann has indicated that Zhmud has conjectured that ${\rm CM}$-groups are metabelian.  This is consistent with Problem 1 of \cite{YBGPPOV1}, which is credited to Zhmud. Furthermore, in \cite{zhmud}, it is shown that ${\rm CM}$-groups are exactly the groups where any two elements with the same normal closure are conjugate. Problem 12.15 of the Kourovka Notebook \cite{Kourovka} asks if such groups are necessarily metabelian.  In light of Corollary~\ref{CM}, we see that showing that these groups are metabelian would provide some evidence that GVZ-groups have bounded derived length (or perhaps are metabelian).  We would like to thank Professor Abdollahi for pointing out the connection between these groups and the problem in the Kourovka Notebook.  

We close by remarking that in \cite{saeidi} it is proved that these groups are metabelian under the additional strong hypothesis that the group has at most five distinct irreducible character degrees.  We note that there is a misstatement in the review of \cite{saeidi} in Math reviews where is stated that the author had proved all ${\rm CM}$-groups are metabelian (see \cite{rev}). 


\end{document}